\newtheorem{theorem}{Theorem}
\newtheorem{lemma}{Lemma}
\newtheorem{corollary}{Corollary}
\begin{document}
\title{Erratum for Ricci-flat graphs with girth at least five}
\date{\today}
\author[D. Cushing]{David Cushing}
\address{D. Cushing,  Department of Mathematical Sciences, Durham University, Durham DH1 3LE, UK}
\email{david.cushing@durham.ac.uk}

\author[R. Kangaslampi]{Riikka Kangaslampi}
\address{R. Kangaslampi,  Computing sciences, Faculty of information technology and communication sciences,
Tampere University,  FI-33014 Tampere, Finland
}
\email{riikka.kangaslampi@tuni.fi}

\author[Y. Lin]{Yong Lin}
\address{Y. Lin, Yau Mathematical Sciences Center, Tsinghua University, Beijing 100084, China}
\email{yonglin@mail.tsinghua.edu.cn}

\author[S. Liu]{Shiping Liu}
\address{S. Liu, School of Mathematical Sciences, University of Science and Technology of China, Hefei 230026, China}
\email{spliu@ustc.edu.cn}

\author[L. Lu]{Linyuan Lu}
\address{L. Lu, Department of Mathematics, University of South Carolina, Columbia, SC 29208, USA}
\email{lu@math.sc.edu}

\author[S.-T. Yau]{Shing-Tung Yau}
\address{S.-T. Yau, Department of Mathematics, Harvard University, Cambridge, MA 02138, USA}
\email{yau@math.harvard.edu}
\begin{abstract}
   This erratum will correct the classification of Theorem 1 in \cite{LLY14} that misses the Triplex graph.
\end{abstract}
\maketitle
In Theorem 1 of \cite{LLY14}, the classification of Ricci-flat graph with girth $g(G)\geq 5$ missed one graph -- the Triplex graph, as discovered by three authors: Cushing, Kangaslampi, and Liu.
Here is the correct theorem.
\begin{theorem}\label{t1}
Suppose that $G$ is a Ricci-flat graph with girth $g(G)\geq 5$. Then
$G$ is one of the following graphs,
\begin{enumerate}
\item the infinite path,
\item  cycle $C_n$ with $n\geq 6$,
\item the  dodecahedral graph,
\item the Petersen graph,
\item the half-dodecahedral graph.
\item the Triplex graph.
\end{enumerate}
\end{theorem}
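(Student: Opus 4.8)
The plan is to exploit the rigidity that girth five imposes on the local structure of $G$ and combine it with the algebraic constraints coming from Ricci-flatness, reducing the classification to a finite, exhaustive case analysis. First I would record what $g(G)\geq 5$ buys us locally. For any edge $\{x,y\}$ the endpoints share no common neighbour (no triangles) and no neighbour of $x$ is adjacent to a neighbour of $y$ except through the edge itself (no $4$-cycles). Consequently the only way a neighbour $a$ of $x$ can be brought close to a neighbour $b$ of $y$ is along a pentagon $x\text{-}a\text{-}w\text{-}b\text{-}y\text{-}x$, so the transport geometry around each edge is governed entirely by the pentagons through that edge. I would translate the zero-curvature condition into a statement about these pentagons and the induced matching between $N(x)\setminus\{y\}$ and $N(y)\setminus\{x\}$.

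Second, I would use this local picture to pin down the degree. The curvature of an edge $\{x,y\}$ is controlled by $d_x$, $d_y$ and the number of pentagons through the edge, so imposing the flatness relation on every edge forces $G$ to be regular and bounds the common degree by $d\leq 3$ (if regularity is already built into the definition of Ricci-flat, this step only has to produce the degree bound). The cases $d\leq 2$ are then immediate: degree $\leq 1$ is degenerate, and degree $2$ gives only paths and cycles. Since $g(G)\geq 5$ excludes $C_3$ and $C_4$ while flatness excludes the positively curved $C_5$, what remains for $d\le 2$ is exactly the infinite path and the cycles $C_n$ with $n\geq 6$, as listed.

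All the remaining work is in the cubic case $d=3$, which I would handle by growing the graph outward from a single edge. The Ricci-flat matching condition forces each edge to lie on a prescribed number of pentagons, and this in turn determines the second neighbourhood of every vertex up to finitely many combinatorial choices, with $g(G)\geq 5$ forbidding the short cycles that would otherwise appear when adjacent local patches are glued. Pushing this branching analysis until the graph is forced to close up should recover the finite list -- the dodecahedral graph, the Petersen graph, the half-dodecahedral graph, and the Triplex graph -- while simultaneously showing that the local constraints leave no room for an infinite cubic example.

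The hard part, and the crux of this erratum, is the exhaustiveness of that last enumeration. The admissible local extensions proliferate, and it is precisely here that the original argument in \cite{LLY14} overlooked a branch and thereby missed the Triplex graph. To guarantee completeness I would organise the cubic case so that every admissible extension of the local configuration is tracked explicitly, and I would cross-check the final inventory against an exhaustive search over cubic graphs of girth five up to the relevant radius, so that no flat configuration can slip through unnoticed.
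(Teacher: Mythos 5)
Your overall frame (girth five makes the curvature of an edge a function of the degrees and pentagon counts; flatness forces regularity with $d\le 3$; the degree-two case gives the infinite path and $C_n$, $n\ge 6$; all difficulty lies in the cubic case) matches the known structure of the problem, and those first steps are indeed theorems of \cite{LLY14}. But the proposal has a genuine gap exactly at the point you yourself call the crux: you give no mechanism that makes the cubic enumeration exhaustive, or that forces the growing configuration to close up into a finite graph. ``Track every admissible extension explicitly'' restates the goal rather than achieving it: there are infinitely many cubic graphs of girth five, so a configuration can satisfy all local constraints out to any fixed radius without belonging to the list, and a branching analysis needs a structural invariant to terminate. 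The paper supplies precisely such an invariant. Lemma 3 of \cite{LLY14} (restated here as Lemma~\ref{local_structure}) says that every flat edge between degree-three vertices lies on a pair of \emph{opposite} pentagons, and this splits the cubic case into a clean dichotomy: either no edge is \emph{irregular} (i.e., no edge lies on exactly three pentagons), in which case opposite pentagons can be glued recursively to give a surface embedding of $G$ with all pentagonal faces, and the global (Euler-characteristic type) argument of \cite{LLY14} yields the dodecahedral, Petersen, and half-dodecahedral graphs; or some edge is irregular, and then a short forcing argument --- the entire content of this erratum --- shows the configuration completes uniquely to the Triplex graph. Without this dichotomy, or a substitute for it, your plan can neither rule out an infinite cubic example nor certify that the inventory is complete.

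The fallback you propose --- cross-checking against an exhaustive computer search over cubic girth-five graphs ``up to the relevant radius'' --- does not close the gap, because the relevant radius is exactly what is unknown: a finite search cannot by itself exclude an infinite Ricci-flat graph, nor a finite one larger than the search bound. You would first need a theorem bounding the order or diameter of any cubic Ricci-flat girth-five graph, and proving such a bound is the same closure problem all over again. (A rigorous computer-assisted classification does exist, in \cite{CKLLLY18}, but it rests on structural lemmas that bound the search, not on raw enumeration.) It is worth noting that the error this erratum repairs was precisely an unjustified exhaustiveness claim of the kind your plan would repeat: \cite{LLY14} implicitly assumed every cubic Ricci-flat graph admits an all-pentagon surface embedding, i.e., that the irregular-edge branch cannot occur, and thereby missed the Triplex graph. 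So the missing ingredient is not more careful bookkeeping but the irregular/non-irregular dichotomy (or an equivalent global closure argument); that is what turns the local analysis into a complete classification.
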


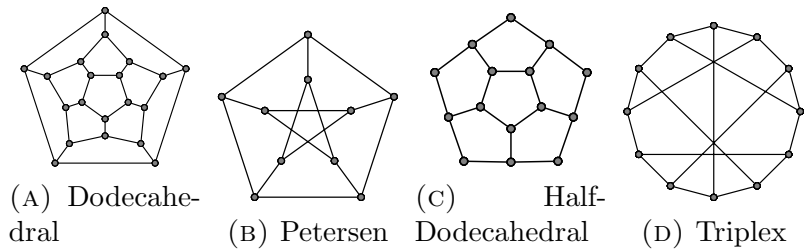
\begin{figure}[h!]
\begin{center}
\tikzstyle{every node}=[circle, draw, fill=black!50,
inner sep=0pt, minimum width=4pt]
    \begin{subfigure}[b]{0.2\textwidth}
        \centering
        \resizebox{\linewidth}{!}{

 \begin{tikzpicture}[thick,scale=0.6]%
       \draw \foreach \x in {18,90,...,306} {
        (\x+36:1)  -- (\x-36:1)
        (\x:3.5)  -- (\x-72:3.5)
        (270:1) node[label=right:$$](v){}
		(342:1) node[label=right:$$](u){}
		(198:1) node[label=left:$$](w){}
		(54:1) node[label=right:$$](y){}
		(126:1) node[label=left:$$](x){}
	        (270:1.7) node[label=right:$$](v1){}
		(342:1.7) node[label=right:$$](u1){}
		(198:1.7) node[label=left:$$](w1){}
		(54:1.7) node[label=right:$$](y1){}
		(126:1.7) node[label=left:$$](x1){}
	        (306:2.5) node[label=right:$$](v2){}
		(378:2.5) node[label=right:$$](u2){}
		(234:2.5) node[label=left:$$](w2){}
		(90:2.5) node[label=right:$$](y2){}
		(162:2.5) node[label=left:$$](x2){}	
		(306:3.5) node[label=right:$$](v3){}
		(378:3.5) node[label=right:$$](u3){}
		(234:3.5) node[label=left:$$](w3){}
		(90:3.5) node[label=right:$$](y3){}
		(162:3.5) node[label=left:$$](x3){}			
        };
        \draw (v)--(v1) (u)--(u1) (w)--(w1) (x)--(x1) (y)--(y1);
        \draw (y1)--(u2)--(u1)--(v2)--(v1)--(w2)--(w1)--(x2)--(x1)--(y2)--(y1);
        \draw (v2)--(v3) (u2)--(u3) (w2)--(w3) (x2)--(x3) (y2)--(y3);

\end{tikzpicture}
}
\caption{\small Dodecahedral}
\label{fig:dodecahedron}
\end{subfigure}
\begin{subfigure}[b]{0.2\textwidth}
    \centering
        \resizebox{\linewidth}{!}{
 \begin{tikzpicture}[thick,scale=1]%
        \draw \foreach \x in {18,90,...,306} {
        (\x:2)  -- (\x-72:2)

         (90:2) node[label=right:$$](v){}
	 (162:2) node[label=right:$$](u){}
	 (18:2) node[label=left:$$](w){}
	 (234:2) node[label=right:$$](y){}
	 (306:2) node[label=left:$$](x){}
	 (90:1) node[label=right:$$](v1){}
	 (162:1) node[label=right:$$](u1){}
	 (18:1) node[label=left:$$](w1){}
	 (234:1) node[label=right:$$](y1){}
	 (306:1) node[label=left:$$](x1){}
	};	
\draw (x)--(x1) (y)--(y1) (u)--(u1) (v)--(v1) (w) --(w1);
\draw (x1)--(u1) (y1)--(v1) (u1) --(w1) (v1)--(x1) (w1)--(y1);
\end{tikzpicture}
}
\caption{\small Petersen}
\label{fig:petersen}
    \end{subfigure}
    \begin{subfigure}[b]{0.2\textwidth}
        \centering
        \resizebox{\linewidth}{!}{
 \begin{tikzpicture}[thick,scale=0.6]%
       \draw \foreach \x in {18,90,...,306} {
        (\x+36:1)  -- (\x-36:1)
        (270:1) node[label=right:$$](v){}
		(342:1) node[label=right:$$](u){}
		(198:1) node[label=left:$$](w){}
		(54:1) node[label=right:$$](y){}
		(126:1) node[label=left:$$](x){}
	        (270:2.05) node[label=right:$$](v1){}
		(342:2.05) node[label=right:$$](u1){}
		(198:2.05) node[label=left:$$](w1){}
		(54:2.05) node[label=right:$$](y1){}
		(126:2.05) node[label=left:$$](x1){}
	        (306:2.5) node[label=right:$$](v2){}
		(378:2.5) node[label=right:$$](u2){}
		(234:2.5) node[label=left:$$](w2){}
		(90:2.5) node[label=right:$$](y2){}
		(162:2.5) node[label=left:$$](x2){}	
        };
        \draw (v)--(v1) (u)--(u1) (w)--(w1) (x)--(x1) (y)--(y1);
        \draw (y1)--(u2)--(u1)--(v2)--(v1)--(w2)--(w1)--(x2)--(x1)--(y2)--(y1);
\end{tikzpicture}
}
\caption{\small Half-Dodecahedral}
\label{fig:halfdodecahedron}
\end{subfigure}
   \begin{subfigure}[b]{0.2\textwidth}
        \centering
        \resizebox{\linewidth}{!}{

 \begin{tikzpicture}[thick,scale=1]%
 \draw \foreach \x in {0,30,...,330} {
        (\x:2)  -- (\x+30:2)
		(90:2) node(x1){}
		(60:2) node(x2){}
		(30:2) node(x3){}
		(0:2) node(x4){}
		(-30:2) node(x5){}
		(-60:2) node(x6){}
		(120:2) node(x12){}
		(150:2) node(x11){}
		(180:2) node(x10){}
		(210:2) node(x9){}
		(240:2) node(x8){}
		(270:2) node(x7){}		
		};
 \draw (x1)--(x7) (x2)-- (x10) (x3)--(x8) (x4)--(x12) (x5)-- (x9) (x6)-- (x11);		
\end{tikzpicture}
}
\caption{\small Triplex}
\label{fig:triplex}
    \end{subfigure}
   \end{center}
   \caption{The four Ricci-flat graphs with girth 5}
   \label{fig:result}
\end{figure}

This error was caused by an incorrect implicit statement (in \cite{LLY14}) that any $3$-regular Ricci-flat graph
$G$ has a surface embedding whose faces are all pentagons. In this erratum,
we analyze the case that $G$ does not have a surface embedding whose faces are all pentagons.
We will show that this case leads a unique missing graph --- the Triplex graph. An alternative method to correct Theorem 1 in \cite{LLY14} is given in \cite{CKLLLY18}.

Recall that Lemma 3 item 2 in \cite{LLY14} states:
\begin{lemma}
  \label{local_structure} For any edge $xy$ of a graph of girth at least $5$,
  if $d_x=d_y=3$ and  $\kappa(x,y)=0$, then $xy$ belongs to two $5$-cycles $P_1$ and $P_2$ such that $P_1\cap P_2=xy$.
\end{lemma}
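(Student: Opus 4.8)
The plan is to reduce the statement to a small optimal-transport computation on the ball of radius one about the edge $xy$. Write the three neighbours of $x$ as $y,x_1,x_2$ and the three neighbours of $y$ as $x,y_1,y_2$. Because $g(G)\ge 5$ there are no triangles or $4$-cycles, so first I would record the resulting rigid local picture: the six vertices $x,y,x_1,x_2,y_1,y_2$ are distinct, no $x_i$ is adjacent to any $y_j$, and $d(x,y)=1$, $d(x,y_j)=d(x_i,y)=2$, while $d(x_i,y_j)\in\{2,3\}$. Moreover $d(x_i,y_j)=2$ exactly when $x_i$ and $y_j$ have a common neighbour $w_{ij}$, and the absence of $4$-cycles forces each such $w_{ij}$ to be unique, and forces the $w_{ij}$ coming from distinct index pairs to be pairwise distinct and distinct from $x,y,x_1,x_2,y_1,y_2$. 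Each \emph{contact} (each pair with $d(x_i,y_j)=2$) thus produces the $5$-cycle $x\,x_i\,w_{ij}\,y_j\,y$ through $xy$, and two such $5$-cycles satisfy $P_1\cap P_2=xy$ precisely when they use different neighbours of $x$ and different neighbours of $y$. Encoding the contacts by $c_{ij}\in\{0,1\}$, the conclusion of the lemma is therefore equivalent to the assertion that the $2\times 2$ bipartite graph on $\{x_1,x_2\}\cup\{y_1,y_2\}$ with edge set $\{(i,j):c_{ij}=1\}$ admits a perfect matching.

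Next I would compute $\kappa(x,y)$ from the idleness measures $\mu_v^{\alpha}$, which put mass $\alpha$ on $v$ and $(1-\alpha)/3$ on each neighbour of $v$; set $t=(1-\alpha)/3$. Comparing $\mu_x^{\alpha}$ with $\mu_y^{\alpha}$, the surplus of mass sits at $x$ (amount $\alpha-t$) and at $x_1,x_2$ (amount $t$ each), with the matching deficit at $y,y_1,y_2$. Against a baseline in which every unit of mass travels distance $2$, the only move that saves is $x\to y$ (saving $1$ per unit) and the only moves that overpay are the arcs $x_i\to y_j$ with $c_{ij}=0$ (costing $1$ extra per unit); hence the transport cost equals $2M-\pi(x,y)+(\text{mass on the }c_{ij}=0\text{ arcs})$, where $M$ is the total transported mass and $\pi(x,y)$ the mass sent directly from $x$ to $y$. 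An optimal plan maximises $\pi(x,y)$ (to the value $\alpha-t$) and then solves the residual $2\times 2$ transportation problem between $\{x_1,x_2\}$ and $\{y_1,y_2\}$; this yields $W_1(\mu_x^{\alpha},\mu_y^{\alpha})=1+B$ with $B=2t-\nu$, where $\nu$ is the maximum mass routable along the contact arcs, so that $B=0,\,t,\,2t$ according as the contact graph has a perfect matching, a maximum matching of size one, or no edge at all.

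For optimality I would pair an explicit coupling (upper bound) with a $1$-Lipschitz Kantorovich potential (lower bound). The lower bound is the one delicate point, and it is exactly what the lemma needs: assuming no perfect matching, K\"onig's theorem gives a vertex cover of the contact graph of size at most one, and from such a cover one builds a potential---for instance $f(x)=1,f(y)=0,f(x_i)=2,f(y_j)=-1$ when the contact graph is edgeless, with the covered vertex's values adjusted so that the contact arcs become tight---certifying $W_1\ge 1+t$; this reduces to checking the finitely many $1$-Lipschitz inequalities against the girth-forced distances. Since $d(x,y)=1$ we have $\kappa_{\alpha}(x,y)=1-W_1=-B$, and as $B/t$ is independent of $\alpha$ the Lin--Lu--Yau limit gives $\kappa(x,y)=\lim_{\alpha\to1}\kappa_{\alpha}(x,y)/(1-\alpha)\in\{0,-\tfrac13,-\tfrac23\}$, vanishing exactly when $B=0$. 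Hence $\kappa(x,y)=0$ forces $B=0$, i.e.\ the contact graph contains a perfect matching, which by the first paragraph supplies two $5$-cycles $P_1,P_2$ through $xy$ with $P_1\cap P_2=xy$. As a consistency check, the dodecahedron realises two contacts per edge (its two bounding pentagons), the Petersen graph realises four, and the infinite $3$-regular tree realises none with $\kappa=-\tfrac23$, all matching the formula; the genuinely fiddly part is verifying optimality of the transport in the mixed cases rather than the girth bookkeeping, which is routine.
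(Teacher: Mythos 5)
The erratum itself contains no proof of this lemma: it is quoted verbatim from Lemma~3(2) of \cite{LLY14}, so there is no in-paper argument to compare yours against. Judged on its own, your proof is correct, and it is in the same spirit as the explicit coupling/Lipschitz-function computations by which such curvature facts are established in \cite{LLY14}. Your reduction is right: girth $\ge 5$ forces the six vertices $x,y,x_1,x_2,y_1,y_2$ to be distinct with no $x_i\sim y_j$, each contact pair has a unique common neighbour $w_{ij}$, and the $w_{ij}$ are new vertices, pairwise distinct (any coincidence creates a $4$-cycle); hence $5$-cycles through $xy$ correspond exactly to contacts, and two of them meet only in $xy$ if and only if the contact graph on $\{x_1,x_2\}\cup\{y_1,y_2\}$ has a perfect matching. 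The transport computation also checks out: with $t=(1-\alpha)/3$ the identity $\alpha+3t=1$ gives cost $1+2t-\nu$ for your coupling, so $W_1(\mu_x^\alpha,\mu_y^\alpha)\le 1+B$, and your dual potentials (for the edgeless case $f(x)=1$, $f(y)=0$, $f(x_i)=2$, $f(y_j)=-1$; in the matching-number-one case the covered vertex's value moved by $1$ so the contact arcs are tight) are indeed $1$-Lipschitz against the girth-forced distances and certify $W_1\ge 1+t$ whenever no perfect matching exists, which is the only direction the lemma needs. Two small points deserve explicit mention in a written version: first, a $1$-Lipschitz function defined only on the six-point subspace must be extended to all of $G$ (McShane extension $\tilde f(v)=\min_u\bigl(f(u)+d(u,v)\bigr)$) before Kantorovich duality applies; second, your side remark that ``an optimal plan maximises $\pi(x,y)$'' needs no separate justification, since the matching primal and dual values already prove optimality by strong duality. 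With $B/(1-\alpha)$ constant in $\alpha$, the Lin--Lu--Yau limit yields $\kappa(x,y)\in\{0,-\tfrac13,-\tfrac23\}$, vanishing exactly in the perfect-matching case, so $\kappa(x,y)=0$ produces the two required $5$-cycles.
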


\begin{figure}[!ht]
\begin{center}
\tikzstyle{every node}=[circle, draw, fill=black!50,
                        inner sep=0pt, minimum width=4pt]
 \begin{tikzpicture}[thick,scale=0.8]%
 \draw \foreach \x in {18,90,...,306} {
        (\x:1)  -- (\x+72:1)
		(18:1) node[label=above:$y_2$]{}
		(162:1) node[label=left:$x_2$]{}
		(90:1) node[label=right:$u$]{}
		};
   \begin{scope}[shift={(0,-1.618)}]
       \draw \foreach \x in {18,90,...,306} {
        (\x+36:1)  -- (\x-36:1)
        (270:1) node[label=above:$v$]{}
		(342:1) node[label=right:$y_1$]{}
		(198:1) node[label=left:$x_1$]{}
		(54:1) node[label=right:$y$]{}
		(126:1) node[label=left:$x$]{}
        };
  \end{scope}
\end{tikzpicture}
\end{center}
\end{figure}

Since $G$ contains no cycle of length $3$ or $4$, any $C_5$ containing the edge $xy$
is uniquely determined by a $3$-path passing through $xy$. Since $d_x=d_y=3$,
there are four $3$-paths of form $x_ixyy_j$ for $i,j=1,2$. Here $x_1,x_2$
are two neighbors of $x$ other than $y$ and $y_1, y_2$ are two neighbors of $y$
other than $x$.
We say two $C_5$'s
are {\em opposite} to each other at $xy$ if one $C_5$ passes through $x_ixyy_j$
and the other one passes through $x_{3-i}xyy_{3-j}$.
The above lemma says that there is a pair of opposite $C_5$'s sharing the edge $xy$.
We say an edge $xy$
is {\em irregular} if there are three or four $C_5$ passing through it.

From this lemma, we have the following corollary.
\begin{corollary}
  If $G$ is a $3$-regular Ricci-flat graph and contains no irregular edge,
then $G$ can be embedded into a surface so that all faces are pentagons.
\end{corollary}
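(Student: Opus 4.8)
The plan is to convert the curvature hypothesis into a purely combinatorial statement about $5$-cycles and then realise the embedding from a well-chosen family of $C_5$'s. First I would record the local numerology. Since $G$ is $3$-regular and Ricci-flat, every edge $xy$ satisfies $d_x=d_y=3$ and $\kappa(x,y)=0$, so Lemma~\ref{local_structure} applies to \emph{every} edge: each edge carries a pair of opposite $C_5$'s and hence lies in at least two $C_5$'s. Because only the four $3$-paths $x_ixyy_j$ can close up, an edge lies in at most four $C_5$'s, and the exclusion of irregular edges removes the value three. Thus every edge lies in exactly two or exactly four $C_5$'s (call these \emph{two-type} and \emph{four-type} edges), and in both cases the set of $C_5$'s through it is closed under taking opposites: for two, the unique pair must be the opposite pair guaranteed by the lemma; for four, all four $3$-paths close.

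The key reduction I would isolate is that a cellular embedding of $G$ with only pentagonal faces is the same datum as a family $\mathcal F$ of $C_5$'s that covers every edge exactly twice, the two $C_5$'s through each edge being opposite. Given such an $\mathcal F$, I would build an embedding scheme (a rotation system with edge signs, so a possibly non-orientable surface is allowed) by declaring the three corners at each degree-$3$ vertex $v$ to be the three $2$-paths $avb$ carried by members of $\mathcal F$; the opposite condition guarantees that the two faces meeting along any edge glue along a perfect matching of the side-neighbours, so the face-tracing walk follows each selected $C_5$ all the way around and returns after five steps. Conversely the faces of any pentagonal embedding form such an $\mathcal F$. This step is routine once the gluing bookkeeping is set up, and it lets the whole argument live at the level of choosing $C_5$'s.

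It then remains to produce $\mathcal F$, the constraint being that a $C_5$ selected at one of its edges must be selected at all five, so coverage stays exactly two everywhere. When every edge is two-type there is nothing to choose: I would simply take $\mathcal F$ to be \emph{all} $C_5$'s of $G$. Every edge is then covered exactly twice by its opposite pair, and consistency is automatic because each $C_5$ through each edge is already selected, so face-tracing follows each $C_5$ and all faces are pentagons. This disposes of the dodecahedral-type situation at once.

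The hard part, and where I expect the real obstacle, is the presence of a four-type edge. Around a four-type edge $xy$ the girth and $3$-regularity already force a rigid $10$-vertex neighbourhood $\{x,y,x_1,x_2,y_1,y_2,z_{11},z_{12},z_{21},z_{22}\}$ with $z_{ij}\sim x_i,y_j$ and all of $x,y,x_i,y_j$ of full degree, leaving only the third neighbour of each $z_{ij}$ undetermined. Girth alone does not pin these down, so I would feed in the edge-type dichotomy on the incident edges (each again forced to be two- or four-type and to carry an opposite pair) to show the $z_{ij}$ must join up among themselves; the neighbourhood then closes into the unique $(3,5)$-cage, the Petersen graph, and $G$ is Petersen. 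For Petersen one takes $\mathcal F$ to be the standard six pentagons of its projective-plane embedding, each edge covered twice. Concentrating the difficulty here is deliberate: the two-type case is choice-free and automatically consistent, whereas the four-type case is exactly where the local freedom might be exercised inconsistently, and the cleanest way to rule that out is to show it forces the Petersen graph outright.
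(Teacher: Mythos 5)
Your proposal is correct, but it takes a genuinely different route from the paper. The paper's proof is a local, greedy gluing argument: start with any $C_5$, attach a pentagonal face, and whenever the growing region $M$ has a boundary edge $xy$ (lying on exactly one face so far), glue a pentagon onto the opposite $C_5$ at $xy$ --- which exists precisely because $xy$ is not irregular --- until no boundary edge remains; it never needs to count the $C_5$'s through an edge, let alone identify the graph. You instead make a global selection: you first prove the two-or-four dichotomy for the number of $C_5$'s per edge, recast a pentagonal embedding as a family $\mathcal F$ of $C_5$'s covering each edge exactly twice by an opposite pair (your corner bookkeeping is sound: at a degree-$3$ vertex, double coverage of the three incident edges forces each of the three corners to be used exactly once, so vertex links are circles), and then split cases. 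In the all-two-type case, taking $\mathcal F$ to be all $C_5$'s is choice-free and manifestly consistent, which is cleaner than the paper's recursion. In the four-type case your sketch is completable exactly as you indicate, though you should write it out: each edge incident to the four-type edge $xy$ already carries two non-opposite $C_5$'s (both passing through $y$, respectively $x$), so Lemma~\ref{local_structure} plus the absence of irregular edges forces it to be four-type; closing the remaining $3$-paths forces $z_{11}z_{22}$ and $z_{12}z_{21}$ to be edges (the other pairings create triangles through the $y_j$ or $x_i$), every vertex of the ten-vertex configuration becomes full, and connectedness of $G$ gives the Petersen graph, whose hemi-dodecahedral projective-plane embedding supplies $\mathcal F$. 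What the paper's route buys is brevity and uniformity across edge types; what yours buys is stronger output --- the four-type case is outright classified as Petersen, in the spirit of \cite{CKLLLY18} --- and an embedding construction whose consistency is verified rather than implicit in a recursive gluing process.
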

\begin{proof}
  View $G$ as $1$-dimension skeleton and glue pentagons to $G$ recursively.
  Starting with any $C_5$ and glue a pentagon to it as a face, call the two-dimensional region $M$.

Let $xy$ be a boundary edge of $M$, that is, an edge belonging to only one pentagon $f$ in $M$. This pentagon $f$ determines an opposite $C_5$ of $G$ with respect to the edge $xy$. We glue a pentagon face to the opposite
$C_5$ at $xy$ to enlarge $M$. Since $G$ contains no irregular edge, every edge must be in exactly two pentagons. Therefore, the process
will continue until $M$ has no boundary edge. When this process ends,
we get an embedding of $G$ into some surface so that every face is a $C_5$.
\end{proof}

We are ready to fix the proof of Theorem 1 in \cite{LLY14}.

\begin{proof}[Proof of Theorem \ref{t1}:]
  In the original proof of Theorem 1 in \cite{LLY14}, we have taken
  care of all the cases except that $G$ is $3$-regular and contains
  an irregular edge $xy$. The edge $xy$ is either in three $C_5$'s or four $C_5$'s. We will show that the first case leads to the Triplex graph while the second case leads to the Petersen graph.

  \begin{figure}[!ht]
\begin{center}
\tikzstyle{every node}=[circle, draw, fill=black!50,
                        inner sep=0pt, minimum width=4pt]
 \begin{tikzpicture}[thick,scale=0.8]%
 \draw \foreach \x in {18,90,...,306} {
        (\x:1)  -- (\x+72:1)
		(18:1) node[label=above:$y_2$](y2){}
		(162:1) node[label=left:$x_2$](x2){}
		(90:1) node[label=right:$u$](u){}
		};
   \begin{scope}[shift={(0,-1.618)}]
       \draw \foreach \x in {18,90,...,306} {
        (\x+36:1)  -- (\x-36:1)
        (270:1) node[label=above:$v$](v){}
		(342:1) node[label=right:$y_1$](y1){}
		(198:1) node[label=below:$x_1$](x1){}
		(54:1) node[label=right:$y$](y){}
		(126:1) node[label=left:$x$](x){}
        };
      \end{scope}
      \draw (-3.2,-1.8) node[label=left:$w$] (w){};
      \draw (-2,-1.8) node[label=below:$w_1$] (w1){};
      \draw (2,0) node[label=right:$w_2$] (w2){};
      \draw (w)-- (x2);
      \draw (w1)-- (x1);
      \draw (w2)-- (y2);
      \draw (w) to [out=-60,in=-120, distance=2.5cm] node[draw=none,fill=none,right] {} (y1) ;
      \draw[dashed] (w) to [out=0,in=180 ] node[draw=none,fill=none,below] {i)} (w1);
      \draw[dashed] (w1) to [out=90,in=180, distance=2.5cm] node[draw=none,fill=none,left] {ii)} (u) ;
      \draw[dashed] (w) to [out=-90,in=-70, distance=4.5cm] node[draw=none,fill=none,right] {iii)} (w2) ;
      \draw[dashed] (v) to [out=0,in=-90, distance=2cm] node[draw=none,fill=none,below] {iv)} (w2);
    \end{tikzpicture}
    \caption{Starting configuration and possible extensions}
    \label{start}
  \end{center}
\end{figure}
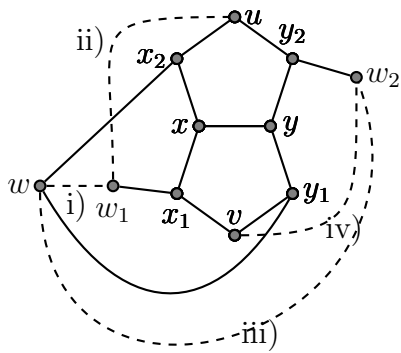
First assume the edge $xy$ is contained in three $C_5$'s:  $ux_2xyy_2u$,  $vx_1xyy_1v$, and $wx_2xyy_1w$.
The path $x_1xyy_2$ is not in any $C_5$. Let $w_1$ be the third neighbor of $x_1$, and $w_2$ be the third neighbor
of $y_2$. Then $w_1$, $w_2$ are two distinct vertices, and they cannot be coincident with any vertex on the three
$C_5$'s. This is our starting configuration (See Figure \ref{start} with solid lines).

Now consider the edge $xx_1$. Observe that the path $w_1x_1xy$ is not on any $C_5$. Thus, the path $w_1x_1xx_2$ must
be extended to a $C_5$. Either $w_1u$ is an edge or $w_1w$ is an edge. Similarly, by considering the edge $yy_2$,
either $w_2w$ or $w_2v$ is an edge. These four possible edges are shown as dashed lines i), ii), iii), and iv) in Figure \ref{start}.
There are four combinations: i)+iii), i)+iv), ii)+iii), ii)+iv).
The combination i)+iii) is impossible since $d_w=3$. The two cases i)+iv) and ii)+iii) are symmetric.
Essentially we have two cases to consider:
\begin{description}
\item[Case] i)+iv): Now consider the edge $w_1x_1$. By Lemma 1, there are a pair of opposite $C_5$ sharing
  the edge $w_1x_1$. Such a pair of opposite pentagons can be obtained only by adding a new vertex $w_3$ as the third neighbor of $w_1$ and connecting $w_3$ to $w_2$, since connecting $w_1$ to $u$ would cause a $C_4$. Now $x_1w_1w_3w_2vx_1$ and $x_1w_1wx_2xx_1$ are the two opposite pentagons at $x_1w_1$.  But, in order to have two opposite pentagons also at the new edge $w_1w_3$ we must have $w_3u$ as an edge, which then creates a $C_4$: $w_3uy_2w_2w_3$. Contradiction!
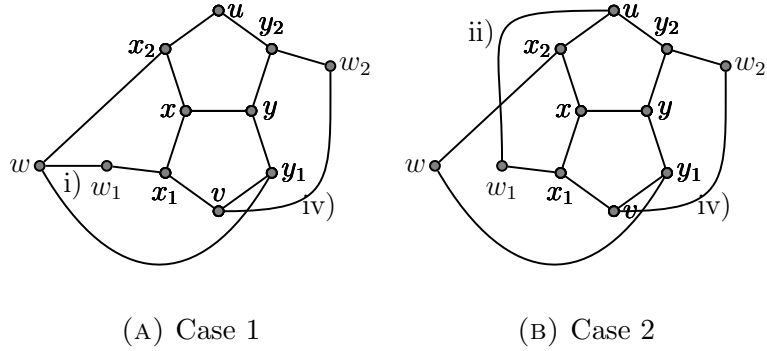
\begin{figure}[!ht]
  \begin{center}
    \tikzstyle{every node}=[circle, draw, fill=black!50,inner sep=0pt, minimum width=4pt]
    \begin{subfigure}[b]{0.4\textwidth}
      \centering
      \resizebox{\linewidth}{!}{
        \begin{tikzpicture}[thick,scale=0.8]%
          \draw \foreach \x in {18,90,...,306} {
            (\x:1)  -- (\x+72:1)
            (18:1) node[label=above:$y_2$](y2){}
            (162:1) node[label=left:$x_2$](x2){}
            (90:1) node[label=right:$u$](u){}
          };
          \begin{scope}[shift={(0,-1.618)}]
            \draw \foreach \x in {18,90,...,306} {
              (\x+36:1)  -- (\x-36:1)
              (270:1) node[label=above:$v$](v){}
              (342:1) node[label=right:$y_1$](y1){}
              (198:1) node[label=below:$x_1$](x1){}
              (54:1) node[label=right:$y$](y){}
              (126:1) node[label=left:$x$](x){}
            };
          \end{scope}
          \draw (-3.2,-1.8) node[label=left:$w$] (w){};
          \draw (-2,-1.8) node[label=below:$w_1$] (w1){};
          \draw (2,0) node[label=right:$w_2$] (w2){};
          \draw (w)-- (x2);
          \draw (w1)-- (x1);
          \draw (w2)-- (y2);
          \draw (w) to [out=-60,in=-120, distance=2.5cm] node[draw=none,fill=none,right] {} (y1) ;
          \draw (w) to [out=0,in=180 ] node[draw=none,fill=none,below] {i)} (w1);
          \draw (v) to [out=0,in=-90, distance=2cm] node[draw=none,fill=none,below] {iv)} (w2);
        \end{tikzpicture}
      }
      \caption{Case 1}
      \label{fig:case1}
    \end{subfigure}
    \begin{subfigure}[b]{0.4\textwidth}
      \centering
      \resizebox{\linewidth}{!}{
        \begin{tikzpicture}[thick,scale=0.8]%
          \draw \foreach \x in {18,90,...,306} {
            (\x:1)  -- (\x+72:1)
            (18:1) node[label=above:$y_2$](y2){}
            (162:1) node[label=left:$x_2$](x2){}
            (90:1) node[label=right:$u$](u){}
          };
          \begin{scope}[shift={(0,-1.618)}]
            \draw \foreach \x in {18,90,...,306} {
              (\x+36:1)  -- (\x-36:1)
              (270:1) node[label=right:$v$](v){}
              (342:1) node[label=right:$y_1$](y1){}
              (198:1) node[label=below:$x_1$](x1){}
              (54:1) node[label=right:$y$](y){}
              (126:1) node[label=left:$x$](x){}
            };
          \end{scope}
          \draw (-3.2,-1.8) node[label=left:$w$] (w){};
          \draw (-2,-1.8) node[label=below:$w_1$] (w1){};
          \draw (2,0) node[label=right:$w_2$] (w2){};
          \draw (w)-- (x2);
          \draw (w1)-- (x1);
          \draw (w2)-- (y2);
          \draw (w) to [out=-60,in=-120, distance=2.5cm] node[draw=none,fill=none,right] {} (y1) ;
          \draw (w1) to [out=90,in=180, distance=2.5cm] node[draw=none,fill=none,left] {ii)} (u) ;
          \draw (v) to [out=0,in=-90, distance=2cm] node[draw=none,fill=none,below] {iv)} (w2);
        \end{tikzpicture}
      }
      \caption{Case 2}
      \label{fig:case2}
    \end{subfigure}
    \caption{Two non-isomorphic ways to continue}
    \label{next}
  \end{center}
\end{figure}
\item[Case] ii)+iv). Let $w_3$ be the third neighbor of $w$. ($w_3$ is distinct from $w_1$ and $w_2$
  since the girth of $G$ is at least $5$.) Applying Lemma 1 on the edge $wx_2$, we must have a pair of opposite
  $C_5$'s passing through $wx_2$. This will force $w_3w_1$ to be an edge. Similarly, by considering $wy_1$,
  we conclude that $w_3w_2$ must be an edge. This completes a $3$-regular graph. It is easy to check
  this is the Triplex graph.

  \begin{figure}[!ht]
  \begin{center}
    \tikzstyle{every node}=[circle, draw, fill=black!50,inner sep=0pt, minimum width=4pt]
        \begin{tikzpicture}[thick,scale=0.8]%
          \draw \foreach \x in {18,90,...,306} {
            (\x:1)  -- (\x+72:1)
            (18:1) node[label=above:$y_2$](y2){}
            (162:1) node[label=left:$x_2$](x2){}
            (90:1) node[label=right:$u$](u){}
          };
          \begin{scope}[shift={(0,-1.618)}]
            \draw \foreach \x in {18,90,...,306} {
              (\x+36:1)  -- (\x-36:1)
              (270:1) node[label=above:$v$](v){}
              (342:1) node[label=right:$y_1$](y1){}
              (198:1) node[label=below:$x_1$](x1){}
              (54:1) node[label=right:$y$](y){}
              (126:1) node[label=left:$x$](x){}
            };
          \end{scope}
          \draw (-3.2,-1.8) node[label=left:$w$] (w){};
          \draw (-2,-1.8) node[label=below:$w_1$] (w1){};
          \draw (2,0) node[label=right:$w_2$] (w2){};
           \draw (-1,-3) node[label=below:$w_3$] (w3){};
          \draw (w)-- (x2);
          \draw (w1)-- (x1);
          \draw (w2)-- (y2);
          \draw (w) to [out=-60,in=-120, distance=2.5cm] node[draw=none,fill=none,right] {} (y1) ;
          \draw (w1) to [out=90,in=180, distance=2.5cm] node[draw=none,fill=none,left] {} (u) ;
          \draw (v) to [out=0,in=-90, distance=2cm] node[draw=none,fill=none,below] {} (w2);
          \draw (w) -- (w3);
          \draw (w1) -- (w3);
          \draw (w3) to [out=0,in=-70, distance=3cm] node[draw=none,fill=none,below] {} (w2);
        \end{tikzpicture}
        \caption{Unique way to complete into the Triplex graph.}
    \label{final}
  \end{center}
\end{figure}
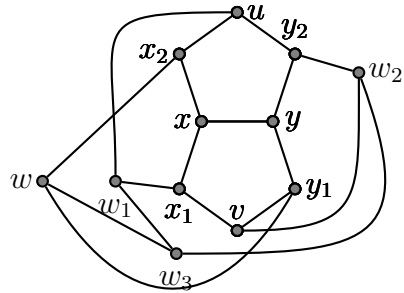
\end{description}

Now we assume $xy$ is in four $C_5$'s. For $i=1,2$ and $j=1,2$, write $k=2(i-1)+j$ and let $u_k$ be the vertex in the $C_5$ extending the path $x_ixyy_j$. Observe that connecting any pair  $u_1u_2$, $u_2u_4$, $u_4u_3$, or $u_1u_3$ results a triangle. So only $u_2u_3$ and $u_1u_4$ can be connected (See Figure \ref{start2}).

Note that $yy_1$ are in two non-opposite $C_5$'s: $xyy_1u_1x_1x$ and $xyy_1u_3x_2x$.
So either $u_2u_3$ or $u_1u_4$ must be an edge.

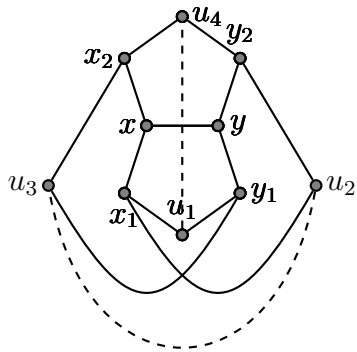
\begin{figure}[!ht]
\begin{center}
\tikzstyle{every node}=[circle, draw, fill=black!50,
                        inner sep=0pt, minimum width=4pt]
 \begin{tikzpicture}[thick,scale=0.8]%
 \draw \foreach \x in {18,90,...,306} {
        (\x:1)  -- (\x+72:1)
		(18:1) node[label=above:$y_2$](y2){}
		(162:1) node[label=left:$x_2$](x2){}
		(90:1) node[label=right:$u_4$](u4){}
		};
   \begin{scope}[shift={(0,-1.618)}]
       \draw \foreach \x in {18,90,...,306} {
        (\x+36:1)  -- (\x-36:1)
        (270:1) node[label=above:$u_1$](u1){}
		(342:1) node[label=right:$y_1$](y1){}
		(198:1) node[label=below:$x_1$](x1){}
		(54:1) node[label=right:$y$](y){}
		(126:1) node[label=left:$x$](x){}
        };
      \end{scope}
      \draw (-2.2,-1.8) node[label=left:$u_3$] (u3){};
      \draw (2.2,-1.8) node[label=right:$u_2$] (u2){};
      \draw (u3)-- (x2);
      \draw (u2)-- (y2);

      \draw (u3) to [out=-60,in=-120, distance=2.5cm] node[draw=none,fill=none,right] {} (y1) ;
      \draw (u2) to [out=-120,in=-60, distance=2.5cm] node[draw=none,fill=none,right] {} (x1) ;
      \draw[dashed] (u2) to [out=-100,in=-80, distance=3.5cm]  (u3);
      \draw[dashed] (u1)-- (u4);

    \end{tikzpicture}
    \caption{Starting configuration and possible extension when $xy$ is in four $C_5$'s.}
    \label{start2}
  \end{center}
\end{figure}

If both $u_1u_4$ and $u_2u_3$ are edges, then the graph is completed and it is the Petersen graph.
\begin{figure}[!ht]
  \begin{center}
\tikzstyle{every node}=[circle, draw, fill=black!50,
                        inner sep=0pt, minimum width=4pt]
 \begin{tikzpicture}[thick,scale=0.8]%
 \draw \foreach \x in {18,90,...,306} {
        (\x:1)  -- (\x+72:1)
		(18:1) node[label=above:$y_2$](y2){}
		(162:1) node[label=left:$x_2$](x2){}
		(90:1) node[label=right:$u_4$](u4){}
		};
   \begin{scope}[shift={(0,-1.618)}]
       \draw \foreach \x in {18,90,...,306} {
        (\x+36:1)  -- (\x-36:1)
        (270:1) node[label=above:$u_1$](u1){}
		(342:1) node[label=right:$y_1$](y1){}
		(198:1) node[label=below:$x_1$](x1){}
		(54:1) node[label=right:$y$](y){}
		(126:1) node[label=left:$x$](x){}
        };
      \end{scope}
      \draw (-2.2,-1.8) node[label=left:$u_3$] (u3){};
      \draw (2.2,-1.8) node[label=right:$u_2$] (u2){};
      \draw (u3)-- (x2);
      \draw (u2)-- (y2);

      \draw (u3) to [out=-60,in=-120, distance=2.5cm] node[draw=none,fill=none,right] {} (y1) ;
      \draw (u2) to [out=-120,in=-60, distance=2.5cm] node[draw=none,fill=none,right] {} (x1) ;
      \draw (u2) to [out=-100,in=-80, distance=3.5cm]  (u3);
      \draw (u1)-- (u4);
    \end{tikzpicture}\hfil
 \begin{tikzpicture}[thick,scale=0.8]%
 \draw \foreach \x in {18,90,...,306} {
        (\x:1)  -- (\x+72:1)
		(18:1) node[label=above:$y_2$](y2){}
		(162:1) node[label=left:$x_2$](x2){}
		(90:1) node[label=right:$u_4$](u4){}
		};
   \begin{scope}[shift={(0,-1.618)}]
       \draw \foreach \x in {18,90,...,306} {
        (\x+36:1)  -- (\x-36:1)
        (270:1) node[label=above:$u_1$](u1){}
		(342:1) node[label=right:$y_1$](y1){}
		(198:1) node[label=below:$x_1$](x1){}
		(54:1) node[label=right:$y$](y){}
		(126:1) node[label=left:$x$](x){}
        };
      \end{scope}
      \draw (-2.2,-1.8) node[label=left:$u_3$] (u3){};
      \draw (2.2,-1.8) node[label=left:$u_2$] (u2){};
      \draw (u3)-- (x2);
      \draw (u2)-- (y2);

      \draw (u3) to [out=-60,in=-120, distance=2.5cm] node[draw=none,fill=none,right] {} (y1) ;
      \draw (u2) to [out=-120,in=-60, distance=2.5cm] node[draw=none,fill=none,right] {} (x1) ;

      \draw (u1)-- (u4);

      \draw (3.2,-1.8) node[label=right:$z$] (z){};
      \draw (u2) -- (z);
    \end{tikzpicture}\\
    \begin{minipage}{0.45\linewidth}
    \caption{The Petersen Graph.}
    \end{minipage}\hfil
    \begin{minipage}{0.45\linewidth}
    \caption{The edge $zu_2$ is not in any $C_5$. Contradiction!}
    \end{minipage}
  \end{center}
\end{figure}

If only one of them is an edge, by symmetry, we can assume $u_1u_4$ is an edge but $u_2u_3$ are not.
Then $u_2$ must have an new neighbor, called $z$. Now the edge $zu_2$ can not be in any $C_5$.
Otherwise, say $zu_2XYZz$ is the $C_5$. We must have $X\in \{x_1, y_2\}$, $Y\in \{x,u_1,y, u_4\}$,
and $Z\in\{x_2, y_1\}$. But now $Zz$ is not an edge. Contradiction!
\end{proof}

\end{document}